\newtheorem{theorem}{Theorem}
\newtheorem{proposition}[theorem]{Proposition}
\newtheorem{lemma}[theorem]{Lemma}
\newtheorem{corollary}[theorem]{Corollary}
\theoremstyle{definition}
\newtheorem{definition}[theorem]{Definition}
\newtheorem{definition and remark}[theorem]{Definition and Remark}
\newtheorem{remark and definition}[theorem]{Remark and Definition}
\newtheorem{example}[theorem]{Example}
\newtheorem{remark}[theorem]{Remark}
\newtheorem{notation and remark}[theorem]{Notation and Remark}
\DeclareMathOperator{\rank}{rank}
\newcommand{\Gm}{{\mathfrak m}}
\newcommand{\length}{\text{\rm length}}
\newcommand{\Soc}{{\rm Soc}}
\newcommand{\Hilb}{{\rm H}}
\renewcommand{\ell}{{l}}
\def\Gin{{\rm Gin}}
\begin{document}



\subjclass{Primary 13E10, Secondary 13A02, 13F20}



\title[Weak Lefschetz property and ${\mathfrak m}$-full ideals]{The weak Lefschetz property 
for  ${\mathfrak m}$-full ideals and componentwise linear ideals}


%

\author{Tadahito Harima}
\address{Department of Mathematics Education, Ehime University, 
    Matsuyama 790-8577, JAPAN
}

\email{harima@ed.ehime-u.ac.jp}

\author{Junzo Watanabe}
\address{Department of Mathematics, Tokai University, 
    Hiratsuka 259-1292, JAPAN
}

\email{watanabe.junzo@tokai-u.jp}


%

\thanks{The first author was  supported by Grant-in-Aid for Scientific Research (C) 
(23540052)}

%

\begin{abstract}
We give a necessary and sufficient condition 
for a standard graded  Artinian ring of the form $K[x_1, \ldots, x_n]/I$, 
where $I$ is an ${\mathfrak m}$-full ideal, 
to have the weak Lefschetz property in terms of graded Betti numbers. 
This is a generalization of a theorem of Wiebe for componentwise linear ideals. 
We also prove that the class of 
componentwise linear ideals and that of completely ${\mathfrak m}$-full ideals 
coincide in characteristic zero and in positive characteristic, with the assumption 
that $\Gin(I)$ w.r.t.\ the graded reverse lexicographic order is stable. 
\end{abstract}

\maketitle




\section{Introduction}

In \cite{Wiebe} Wiebe gave a necessary and sufficient condition, 
among other things,  
for a componentwise linear ideal 
to have the weak Lefschetz property 
in terms of the graded Betti-numbers.  
His result says that 
the following conditions are equivalent for a  componentwise linear ideal $I$ 
in the polynomial ring  $R$.   
\begin{enumerate}
\item[(i)] 
$R/I$ has the weak Lefschetz property.
\item[(ii)] 
$\beta_{n-1,n-1+j}(I)=\beta _{0,j}(I)$ for all $j>d$. 
\item[(iii)] 
$\beta_{i,i+j}(I)={n-1 \choose i}\beta _{0,j}(I)$ 
for all $j>d$ and all $i\geq 0$. 
\end{enumerate}
Here $d$ is the minimum of all $j$ with $\beta_{n-1, n-1+j} (I) > 0$. 
It seems to be an interesting problem to consider 
if there exist classes of ideals other than componentwise linear ideals 
for which these three conditions are equivalent.    

Recently  Conca, Negri and  Rossi~\cite{CNR} 
obtained a result which says that componentwise linear ideals are $\Gm$-full. 
Bearing this in mind, 
one might expect that for $\Gm$-full ideals 
these three conditions might be  equivalent.   
The outcome is not quite what the authors  had expected; 
if $I$ is  $\Gm$-full, 
then although  
 (i) and (iii)  are equivalent, 
the condition (ii) has to be strengthened slightly. 
This is stated in Theorem~\ref{main theorem} below.  
What is interesting is that 
if we assume that both  $I$ and $I + (x)/(x)$  are $\Gm$-full, 
where $x$ is a general linear form,  then it turns out that 
these three conditions are precisely equivalent.  
This is the first main result in this paper 
and is stated in Theorem~\ref{second_main_thm}.   
As a corollary 
we get that for completely $\Gm$-full ideals 
the conditions (i)-(iii) are equivalent. 

In the above cited paper \cite{CNR} the authors in fact proved, 
without stating it, 
that if  $I$ is componentwise linear, 
it is completely $\Gm$-full. 
We supply a new proof in Proposition~\ref{componentwise}, 
using some results of Conca-Negri-Rossi (\cite{CNR}, Propositions 2.8 and 2.11).   
Hence one sees that 
Theorem~\ref{second_main_thm} is a generalization 
of the theorem of Wiebe  (\cite{Wiebe}, Theorem 3.1).   
Proposition~\ref{componentwise} suggests 
investigating whether all completely $\Gm$-full ideals 
should be componentwise linear. 
In Theorem~\ref{completely_m_full_is_componentwise_liear},  
we prove that it is indeed true with the assumption that 
the generic initial ideal with respect to 
the graded reverse lexicographic order is stable.   
This is the second main theorem in this paper. 
Thus we have a somewhat  striking fact that 
the class of completely $\Gm$-full ideals 
and that of componentwise linear ideals coincide at least in characteristic zero, 
since the additional assumption is automatically satisfied in characteristic zero. 
We have been unable to prove it without the assumption that the generic initial ideal is stable, 
but we conjecture it is true. 
In Example~\ref{ex333} 
we provide a rather trivial example which shows that 
Theorem~\ref{second_main_thm} is not contained in  Wiebe's result.

In Section~2, we give some remarks on 
a minimal generating set of an $\Gm$-full ideal, 
and also review a result on graded Betti numbers obtained in  \cite{Watanabe2}. 
These are needed for our proof of the main theorems.  
In Sections~3 and 4, we will prove the main theorems. 

The authors thank the referee and S. Murai 
for suggesting an improved version of the positive characteristic case 
in Theorem~\ref{completely_m_full_is_componentwise_liear}.

Throughout this paper, 
we let $R=K[x_1,\ldots,x_n]$ be the polynomial ring in $n$ variables 
over an infinite field $K$ 
with the standard grading, 
and $\Gm=(x_1,\ldots,x_n)$ the homogeneous maximal ideal.    
All the ideals we consider are homogeneous.


\section{Some properties of $\Gm$-full ideals}

We quickly review some basic properties of $\Gm$-full ideals, 
which were mostly obtained in \cite{Watanabe2}, 
and fix notation which we use throughout the paper. 
We start with  a  definition.

\begin{definition}[\cite{Watanabe}, Definition 4] 
An ideal $I$ of $R$ is said to be {\em $\Gm$-full} 
if there exists an element  $x$ in $R$ such that 
$\Gm I:x = I$. 
\end{definition}

$\Gm$-full ideals are studied in 
\cite{CNR}, \cite{G}, \cite{GH}, \cite{HLNR}, 
\cite{Watanabe}, \cite{Watanabe2} and \cite{Watanabe3}.

\begin{notation and remark}\label{notation of mfull} 
\begin{enumerate}
\item[(1)] 
Suppose that $I$ is  an $\Gm$-full ideal of $R$.  
Then the equality $\Gm I:x = I$ holds 
for a general linear form $x$ in $R$ (\cite{Watanabe}, Remark 2 (i)). 
Moreover, it is easy to see that, for any $x \in R$, if  
$\Gm I :x = I$, then it   implies that  $I:\Gm = I:x$. 
Let $y_1,\ldots,y_\ell$ be homogeneous elements in $I:\Gm$ 
such that $\{\overline{y_1},\ldots,\overline{y_\ell}\}$ is 
a minimal generating set for $(I:\Gm)/I$, 
where $\overline{y_i}$ is the image of $y_i$ in $R/I$. 
Then Proposition 2.2 in \cite{GH} implies that 
$\{xy_1,\ldots,xy_\ell\}$ is a part of a minimal generating set of $I$. 
Write a minimal generating set of $I$ as 
\begin{align*}
xy_1,\ldots,xy_\ell, z_1,\ldots,z_m.
\end{align*}

\item[(2)] 
Suppose that 
$I$ is an $\Gm$-primary $\Gm$-full ideal of $R$. 
The socle of $R/I$ is the ideal of $R/I$ annihilated 
by the maximal ideal $\overline{\Gm}=(\overline{x_1},\ldots,\overline{x_n})$.  
Hence 
\begin{align*}
\Soc (R/I) = \{a\in R/I \mid a\overline{\Gm} = 0\} 
= \oplus_j \Soc (R/I)_j \cong (I:\Gm)/ I. 
\end{align*}
The socle degree of $R/I$ is the maximum of integer $j$ 
with $\Soc(R/I)_j \neq 0$. 
We note that 
\begin{align*}
\max\{\deg (xy_i) \mid i=1,\ldots,\ell\} = c+1, 
\end{align*}
where $c$ is the socle degree of $R/I$. 
\item[(3)] 
Let $x$ be a general linear form in  $R$.  
Let $\delta$ be the minimum integer $j$ with $(R/(I+xR))_j=0$ 
and let $d$ be the least of integers $\{\deg (xy_i) \mid i=1,\ldots,\ell\}$. 
Then $\delta-1$ is equal to the socle degree of $R/(I+xR)$ 
and $d-1$ is equal to the initial degree of $\Soc (R/I)$, 
i.e., $d-1=\min\{j \mid \Soc(R/I)_j\neq 0\}$.  
Note that $\delta$ is independent of $x$ provided  that it is sufficiently  
general.  This is discussed in  Remark~\ref{remark on delta} below.  
\item[(4)] 
Let $\beta_{i,j} (I)$ be the $(i,j)$th graded Betti number of $I$ 
as an $R$-module. 
Then we have $d=\min\{ j \mid \beta_{n-1, n-1+j} (I) > 0 \}$, 
as  $\dim_K\Soc(R/I)_j=\beta_{n-1,n+j}(I)$ (\cite{Wiebe}, Fact 3.3).  
\end{enumerate}
\end{notation and remark}

\begin{remark}\label{remark on delta}
Let  $\delta$  be the integer defined in Remark~\ref{notation of mfull} (3). 
In this remark we want to prove that 
$\delta$ is independent of $x$ as long as  it is sufficiently general.  

Let $I$ be an $\Gm$-primary ideal in  $R=K[x_1,\ldots,x_n]$ 
and set $A=R/I$. 
Let $\xi _1,\ldots, \xi _n$ be indeterminates over $K$. 
Let $K(\xi)=K(\xi _1, \ldots, \xi _n)$ be the rational function field over $K$ and 
put   $A(\xi)=K(\xi) \otimes _K A$.  
It is easy to see that both $A(\xi)$ and $A$ have the same Hilbert function. 
Put $Y=\xi _1\overline{x_1}+\cdots + \xi _n\overline{x_n}\in A(\xi)$.   
It is  proved, in a more general setup  in \cite[Theorem A]{Watanabe}, that  
\begin{align}
\length(A(\xi)/YA(\xi)) \leq \length(A/yA) 
\end{align}
for any linear form $y$ of $A$ and 
\begin{align} \label{inequality1 for delta}
\length(A(\xi)/YA(\xi))  = \length(A/yA)     
\end{align}
for any  sufficiently general linear form $y$ of $A$ 

We  show that 
$
\Hilb(A(\xi)/YA(\xi),i) \leq \Hilb(A/yA,i)
$
for every $i$, 
where
$\Hilb(\ast, i)$ is the Hilbert function of a graded algebra. 
Let $A(\xi)_i$ be the homogeneous part of  $A(\xi)$  of   degree $i$. 
Choose a homogeneous basis $\{v_{\lambda}\}$ for $A$ as a vector space over $K$.  
Then $\{1 \otimes v _{\lambda}\}$ 
is a homogeneous  basis for  $A(\xi)$ over $K(\xi)$.   We fix any pair of such  bases and 
suppose that we write the homomorphisms 
$\times Y: A(\xi)_i \rightarrow A(\xi) _{i+1}$ and  
$\times y: A(\xi)_i \rightarrow A(\xi) _{i+1}$ as matrices  $M_i$ and $N_i$ respectively over these bases.   
It is easy to see that the entries of $M_i$ are  homogeneous linear forms  in $\xi_1, \ldots, \xi_n$, and  
$N_i$ is obtained from $M_i$ by substituting $(\xi _j)$ for $(a _j)$, where $y=a_1x_1 + \cdots + a_nx_n$ with $a_i \in A$.     
Thus  $\rank M_i \geq \rank N_i$ 
and consequently   
\begin{equation}\label{inequality2 for delta}
\Hilb(A(\xi)/YA(\xi),i)\leq \Hilb(A(\xi)/yA(\xi),i)=\Hilb(A/yA,i)
\end{equation}
for every  $i$ and every linear form $y \in A$. 
By (\ref{inequality1 for delta}) and (\ref{inequality2 for delta}) we see  that 
both $A(\xi)/YA(\xi)$ and $A/y A$ have the same Hilbert function, 
if $y \in A$ is a sufficiently general linear form. 
This shows that  $\delta$ is independent of a choice of sufficiently  general linear form $y \in A$.   
\end{remark}

\begin{lemma}\label{gen of mfull} 
Let $I$ be an  $\Gm$-primary  $\Gm$-full ideal of $R$.  
Let $x$,  $z_1, \cdots, z_m$ and $\delta$  be as in Notation~\ref{notation of mfull}. 
Let $\overline{z_i}$ be the image of $z_i$ in $R/xR$ 
and $\overline{I}$ the image of $I$ in $R/xR$.  Then we have: 
\begin{enumerate}
\item[$(1)$] 
$\{\overline{z_1},\ldots,\overline{z_m}\}$ is a minimal generating set of $\overline{I}$. 
\item[$(2)$]
$\deg z_i \leq \delta$ for  $i=1, 2, \ldots, m$. 
\item[$(3)$] 
If $\overline{I}$ is an  $\Gm$-full ideal  of $R/xR$, 
then $\deg z_i = \delta$ for some $i$. 
\end{enumerate}
\end{lemma}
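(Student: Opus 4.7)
The plan is to prove (1) by a direct degree-wise computation that hinges on the identity $I:x = I:\Gm$ from Notation~\ref{notation of mfull}(1), and then to derive (2) and (3) as essentially bookkeeping consequences. For (1), since each $xy_i$ vanishes in $R/xR$, the images $\overline{z_1},\ldots,\overline{z_m}$ certainly generate $\overline{I}$; minimality amounts to $K$-linear independence modulo $\overline{\Gm}\,\overline{I}$ in each degree. Given a relation $\sum c_j \overline{z_j} \in \overline{\Gm}\,\overline{I}$ with $c_j \in K$ and the sum restricted to indices with $\deg z_j$ equal to a fixed value $t$, I would lift to $R$ and write $\sum c_j z_j = a + xb$ with $a \in (\Gm I)_t$ and $b \in R_{t-1}$. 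Then $xb \in I$, so $b \in I:x = I:\Gm$; expanding $b = \sum d_i y_i + u$ with $u \in I_{t-1}$ and $d_i \in K$ (summed over $i$ with $\deg y_i = t-1$) gives $xb = \sum d_i (xy_i) + xu$, hence $\sum c_j z_j - \sum d_i (xy_i) \in (\Gm I)_t$. The $K$-linear independence of $xy_1,\ldots,xy_{\ell},z_1,\ldots,z_m$ modulo $\Gm I$ then forces all $c_j = 0$.

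For (2), by the definition of $\delta$ one has $\overline{I}_j = (R/xR)_j$ for every $j \geq \delta$. Since $R/xR$ is itself a polynomial ring whose homogeneous maximal ideal satisfies $\overline{\Gm}_1 \cdot (R/xR)_k = (R/xR)_{k+1}$ for every $k \geq 0$, we obtain $(\overline{\Gm}\,\overline{I})_j = \overline{I}_j$ for all $j \geq \delta + 1$, so $\overline{I}$ has no minimal generator of degree exceeding $\delta$. In view of (1), this yields $\deg z_i \leq \delta$ for every $i$.

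For (3), assume $\overline{I}$ is $\Gm$-full in $R/xR$. The socle degree of $(R/xR)/\overline{I}$ is $\delta - 1$, and its top nonzero graded component lies inside the socle, so $(\overline{I}:\overline{\Gm})/\overline{I}$ contains a nonzero homogeneous element of degree $\delta - 1$. Applying Notation~\ref{notation of mfull}(1) to the pair $(R/xR,\overline{I})$, multiplying such an element by a general linear form of $R/xR$ produces a minimal generator of $\overline{I}$ in degree $\delta$. Since the number of minimal generators of $\overline{I}$ in each degree is an invariant of the ideal, and by (1) that invariant equals the number of $\overline{z_j}$'s in that degree, at least one $z_j$ has $\deg z_j = \delta$. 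The only non-routine step is the bookkeeping in the proof of (1); once (1) is in hand, the polynomial-ring structure of $R/xR$ gives (2) immediately, and (3) follows by reapplying the $\Gm$-full machinery inside $R/xR$.
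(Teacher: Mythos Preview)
Your proof is correct and follows essentially the same approach as the paper. For part~(1) the paper argues by contradiction that no $\overline{z_i}$ lies in the ideal generated by the remaining $\overline{z_j}$'s, while you verify $K$-linear independence of the $\overline{z_i}$ modulo $\overline{\Gm}\,\overline{I}$; both arguments lift to $R$ and pivot on exactly the same identity $I:x = I:\Gm$, so this is only a cosmetic difference in how minimality is phrased. Parts~(2) and~(3) match the paper's reasoning directly: (2) uses that $\overline{I}$ contains all of $(R/xR)_{\geq \delta}$, and (3) re-applies Notation~\ref{notation of mfull}(1)--(2) to the $\Gm$-full ideal $\overline{I}$ inside $R/xR$ to produce a minimal generator in degree~$\delta$.
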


\begin{proof} 
(1) Suppose that $z_1\in (z_2,\ldots,z_m,x)$. 
Then $z_1=f_2z_2+\cdots+f_mz_m+f_{m+1}x$ for some $f_i\in R$. 
Since $xf_{m+1}=z_1-(f_2z_2+\cdots+f_mz_m)\in I$, 
we have 
\begin{align*}
f_{m+1} \in I:x = I: \Gm=(y_1,\ldots,y_{\ell}, z_1,\ldots,z_m). 
\end{align*}
Hence 
\begin{align*}
f_{m+1}=g_1y_1+\cdots+g_{\ell}y_{\ell}+h_1z_1+\cdots+h_mz_m 
\end{align*}
for some $g_i, h_j\in R$. 
Thus we obtain 
\begin{align*}
z_1-xh_1z_1=(f_2+xh_2)z_2+\cdots+(f_m+xh_m)z_m+g_1xy_1+\cdots+g_{\ell}xy_{\ell},  
\end{align*}
and $z_1\in (xy_1,\ldots,xy_{\ell},z_2,\ldots,z_m)$. 
This is a contradiction. 

(2) 
Since  $\delta -1$ is equal to the socle degree of $R/(I+xR)$, 
(1) implies that 
\begin{align*}
\deg z_i = \deg \overline {z_i} \leq \delta.  
\end{align*} 

(3) 
This is proved by  applying Remark~\ref{notation of mfull} (2) 
to the $\Gm$-full ideal $\overline{I}$ of $R/xR$. 
\end{proof}

\begin{proposition}[\cite{Watanabe2}, Corollary 8] \label{Betti numbers of mfull} 
Let $I$ be an $\Gm$-full ideal of $R$ (not necessarily $\Gm$-primary) and let 
$x$ be a general linear form of  $R$ satisfying $\Gm I : x=I$. 
With Notation~\ref{notation of mfull} (1), 
let $\overline{I}$ be the image of $I$ in $R/xR$ 
and let $\beta_{i,j} (\overline{I})$ be 
the $(i,j)$th graded Betti number of $\overline{I}$ as an $R/xR$-module. 
Set $c_j=\#\{i \mid 1\leq i \leq \ell, \deg (xy_i)=j\}$  
for all $j$. 
Then 
\begin{align*}
\beta_{i,i+j} (I) = \beta_{i,i+j} (\overline{I}) + {n-1\choose i} c_j 
\end{align*} 
for all $i$ and all $j$. 
\end{proposition}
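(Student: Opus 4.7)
The plan is to exploit the $\Gm$-full hypothesis to obtain a short exact sequence whose Tor long exact sequence splits, combine this with change of rings, and iterate along the saturation chain.

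From $\Gm I : x = I$ we have $I : x = I:\Gm$; together with $R$ being a domain, this gives $I \cap xR = x(I:\Gm)$ and hence the graded short exact sequence of $R$-modules
\[
 0 \longrightarrow (I:\Gm)(-1) \xrightarrow{\ \cdot x\ } I \longrightarrow \overline{I} \longrightarrow 0.
\]
The key claim is that the associated connecting homomorphism in the long exact sequence of $\Tor^R_\bullet(-, K)$ vanishes, giving the recursion
\[
 \beta_{i,j}^R(I) = \beta_{i,\,j-1}^R(I:\Gm) + \beta_{i,j}^R(\overline{I}). \qquad (\ast)
\]
My approach is a mapping-cone argument: any $R$-syzygy $\sum g_j \overline{z_j} = 0$ of $\overline I$ lifts to a relation $\sum g_j z_j = x \sum h_k y_k$ in $I$, and because the minimal generators $\{xy_k\}\cup\{z_j\}$ of $I$ are $K$-linearly independent modulo $\Gm I$, the coefficients $h_k$ must all lie in $\Gm$. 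A parallel statement at higher homological degrees shows that the comparison map $Q_\bullet \to P_{\bullet-1}$ between the minimal resolutions of $\overline I$ and $(I:\Gm)(-1)$ lands in $\Gm P_{\bullet-1}$, so the cone is minimal.

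Combining $(\ast)$ with the change-of-rings identity
\[
 \beta_{i,j}^R(\overline{I}) = \beta_{i,j}^S(\overline{I}) + \beta_{i-1,\,j-1}^S(\overline{I})
\]
(which arises from the length-one $R$-resolution $0 \to R(-1) \to R \to S \to 0$ and the fact that $\overline{I}$ is an $S$-module), I then iterate $(\ast)$ using that $I:\Gm$ is itself $\Gm$-full for the same $x$: from $\Gm(I:\Gm) \subseteq I$ and $x(I:\Gm) \subseteq \Gm(I:\Gm)$ one obtains $\Gm(I:\Gm):x = I:\Gm$. This produces an induction along the chain $I \subseteq I:\Gm \subseteq I:\Gm^2 \subseteq \cdots$, which stabilises at $I^{\mathrm{sat}}$ (or reaches $R$ in the $\Gm$-primary case). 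The base case is $(I:\Gm) = I$: then $x$ is $R/I$-regular, $\overline{I} = I/xI$, and the minimal $R$-resolution of $I$ reduces modulo $x$ to a minimal $S$-resolution of $\overline{I}$, giving $\beta_{i,j}^R(I) = \beta_{i,j}^S(\overline{I})$. Since $c_j \equiv 0$ in this case, the proposition holds.

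The main obstacle is the inductive step. Substituting the inductive hypothesis for $I:\Gm$ (with its own data $\overline{I:\Gm}$ and $c'_\bullet$) into $(\ast)$ and applying change of rings, the claim reduces to the arithmetic identity
\[
 \beta_{i,\,i+j-1}^S(\overline{I:\Gm}) + \binom{n-1}{i}\, c'_{j-1} + \beta_{i-1,\,i+j-1}^S(\overline{I}) = \binom{n-1}{i}\, c_j.
\]
To verify this identity I would analyse each socle representative $y_k \in I:\Gm$ modulo $x$: those with $\overline{y_k} \ne 0$ are socle elements of $S/\overline I$ and enlarge $\overline{I}$ to $\overline{I:\Gm}$---contributing (after another change of rings) to the $\beta^S_{i-1,\ast}(\overline{I})$ term---while those with $\overline{y_k} = 0$ produce new minimal generators of $I:\Gm$ whose Koszul syzygies furnish a full rank-$\binom{n-1}{i}$ block. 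Carrying out this degree-wise bookkeeping via Lemma~\ref{gen of mfull} and the inductive hypothesis applied to $I:\Gm$ is where the combinatorial content of the proposition lies.
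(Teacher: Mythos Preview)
The paper does not supply its own proof of this proposition; it is quoted from \cite{Watanabe2}, Corollary~8. So there is no in-paper argument to compare against, and I evaluate your proposal on its own terms.

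Your scheme has a real gap at the end. After setting up the recursion $(\ast)$ and the change-of-rings formula, you reduce everything to the identity
\[
 \beta^S_{i,\,i+j-1}(\overline{I:\Gm}) \;+\; \binom{n-1}{i}\,c'_{j-1} \;+\; \beta^S_{i-1,\,i+j-1}(\overline{I}) \;=\; \binom{n-1}{i}\, c_j,
\]
and then stop, conceding that this ``is where the combinatorial content of the proposition lies.'' But this identity is essentially as hard as the proposition itself: the left-hand side contains the term $\beta^S_{i,\ast}(\overline{I:\Gm})$, whose dependence on $i$ is not a binomial coefficient times a constant unless you already know something like the proposition for $\overline{I}$ inside $S$ --- and $\overline{I}$ need not be $\Gm$-full (the paper treats that as a strictly stronger hypothesis, cf.\ Theorem~\ref{second_main_thm}). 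Your suggestion of splitting the $y_k$ according to whether $\overline{y_k}\in\overline{I}$ does not obviously close this loop, and Lemma~\ref{gen of mfull} gives no control over $\beta^S_{i}(\overline{I:\Gm})$ for $i\ge 1$.

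The detour through the saturation chain is what creates the difficulty. The argument in \cite{Watanabe2} works modulo $x$ from the start: since $x$ is regular on $R$ and on $I$, one has $\beta^R_{i,j}(I)=\beta^S_{i,j}(I/xI)$, and from $I\cap xR = x(I:\Gm)$ one gets the short exact sequence of $S$-modules
\[
 0 \longrightarrow \bigl((I:\Gm)/I\bigr)(-1) \longrightarrow I/xI \longrightarrow \overline{I} \longrightarrow 0,
\]
whose left-hand term is already $\bigoplus_k K(-\deg(xy_k))$, with $S$-Betti numbers exactly $\binom{n-1}{i}c_j$. Once the associated $\Tor^S$ sequence is shown to split (a single mapping-cone/minimality check using the generator structure of Notation~\ref{notation of mfull}), the formula drops out. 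Your sequence $0\to (I:\Gm)(-1)\to I\to\overline I\to 0$ carries the whole ideal $I:\Gm$ on the left rather than just the socle, which is why you are pushed into an induction that never bottoms out cleanly.
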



\section{$\Gm$-Full ideals and the WLP}

\begin{definition} \label{weak Lefschetz property} 
Let $I$ be an $\Gm$-primary ideal of $R$ 
and set $A=R/I=\oplus_{i=0}^c A_i$. 
We say that $A$ has the {\em weak  Lefschetz property} (WLP)  
if there exists a linear form $L \in A_1$ 
such that the multiplication map $\times L:A_i \to A_{i+1}$ 
has full rank for all $0\leq i\leq c-1$. 
\end{definition}

\begin{remark} \label{remark of WLP} 
Let $I$ be an $\Gm$-primary ideal of $R$ 
and set $A=R/I=\oplus_{i=0}^c A_i$. 
\begin{enumerate} 
\item[(1)] 
Let $h_0,h_1,\ldots,h_c$ be the Hilbert function of $A$. 
Then it is easy to see that 
the following conditions are equivalent. 
\begin{enumerate} 
\item[(i)] 
$A$ has the WLP. 
\item[(ii)]
There exists a linear form $L \in A_1$ such that 
\begin{align*}
\dim_K\ker (\times L:A_i \to A_{i+1}) = \max\{0, h_i-h_{i+1}\} 
\end{align*} 
for all $0\leq i\leq c-1$. 
\end{enumerate}
\item[(2)]
If $A$ has the WLP, 
then for a general linear form $L$ in $A$, 
the multiplication map $\times L :A_i \to A_{i+1}$ 
has full rank for all $0\leq i\leq c-1$. 
\item[(3)] 
When we deal with the WLP of Artinian algebras defined by $\Gm$-full ideals, 
we need to select a linear form  with both properties  in the definition of $\Gm$-full ideals  and 
in the definition of WLP.  This is possible since either property is satisfied by a 
sufficiently general linear form.    
\end{enumerate}
\end{remark}

\begin{lemma}\label{eq1 of WLP}
Let $I$ be an $\Gm$-primary $\Gm$-full ideal of $R$. 
Then,  with $\delta$ and $d$ as defined in Notation~\ref{notation of mfull}, 
 the following conditions are equivalent. 
\begin{enumerate}
\item[$(i)$] 
$R/I$ has the WLP. 
\item[$(ii)$]
$\delta \leq d$. 
\end{enumerate}
\end{lemma}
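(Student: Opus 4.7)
The plan is to fix a sufficiently general linear form $x \in R$ that serves simultaneously as the witness of $\mathfrak{m}$-fullness ($\mathfrak{m}I:x=I$) and, if $A=R/I$ has the WLP, as a weak Lefschetz element. Both requirements are met by a generic $x$, by Notation~\ref{notation of mfull}(1) and Remark~\ref{remark of WLP}(3). Writing $L = \bar{x} \in A_1$ and $A = \bigoplus_{i=0}^c A_i$, the problem reduces to controlling when each multiplication map $\mu_j : A_j \to A_{j+1}$, $\bar{a} \mapsto L\bar{a}$, has full rank for $0 \leq j \leq c-1$.

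The pivot of the argument is the identification
\begin{equation*}
\ker \mu_j \;=\; \bigl((I:x)/I\bigr)_j \;=\; \bigl((I:\mathfrak{m})/I\bigr)_j \;=\; \Soc(A)_j,
\end{equation*}
where the middle equality uses the implication $\mathfrak{m}I:x = I \Rightarrow I:x = I:\mathfrak{m}$ recorded in Notation~\ref{notation of mfull}(1). Combined with the definitions of $d$ and $\delta$, this yields two clean criteria: $\mu_j$ is injective iff $\Soc(A)_j=0$ iff $j \leq d-2$ (since $d-1$ is the initial degree of $\Soc(A)$), and $\mu_j$ is surjective iff $(A/LA)_{j+1}=0$ iff $j \geq \delta-1$ (since $\delta$ is the first vanishing degree of $A/LA$).

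With these in hand both directions become formal. Assuming $\delta \leq d$, every $j \in \{0,\ldots,c-1\}$ satisfies either $j \leq d-2$ (so $\mu_j$ is injective) or $j \geq d-1 \geq \delta - 1$ (so $\mu_j$ is surjective); the WLP then follows via Remark~\ref{remark of WLP}(1). Conversely, if $\delta > d$, then at $j = d-1$ the map $\mu_{d-1}$ is neither injective (the socle contributes) nor surjective (since $d-1 < \delta - 1$), breaking the WLP. The one subtlety, and the main obstacle to a fully clean argument, is ensuring that $j = d-1$ lies in the tested range $\{0,\ldots,c-1\}$; since $d-1 \leq c$ always, one must separately rule out $d = c+1$. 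But in that case $\Soc(A)$ sits entirely in the top degree $c$, so $A/LA$ has socle degree at most $c$, and consequently $\delta \leq c+1 = d$, contradicting $\delta > d$.
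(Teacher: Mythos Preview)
Your proof is correct and follows essentially the same route as the paper's: identify $\ker(\times L)$ with $\Soc(A)$ via $I:x = I:\mathfrak{m}$, translate injectivity and surjectivity of $\mu_j$ into the conditions $j\le d-2$ and $j\ge\delta-1$, and test the pivotal degree $j=d-1$ when $\delta>d$. You are in fact more careful than the paper in explicitly ruling out the boundary case $d=c+1$; the only (harmless) imprecision is the biconditional ``$\Soc(A)_j=0 \Leftrightarrow j\le d-2$,'' which need not hold for $j>d-1$ but is never used in that direction.
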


\begin{proof} 
Let $x$ be a general linear form in $R$. 
Note that $\delta$ is equal to the minimum of integers $j$ 
such that 
the multiplication map $\times\overline{x}: (R/I)_{j-1}\rightarrow (R/I)_j$ 
is surjective. 
Furthermore note that 
$\times\overline{x}: (R/I)_{j-1}\rightarrow (R/I)_j$ is injective for all $j\leq d-1$, 
since $\Soc (R/I)=\ker (\times\overline{x}: (R/I)\rightarrow (R/I))$ 
and $d-1$ is equal to the initial degree of $\Soc(R/I)$. 
Hence $(ii)$ $\Rightarrow$ $(i)$ as is easily seen. 
Assume that $ d < \delta $. 
Then the map  
$\times\overline{x}: (R/I)_{d-1}\rightarrow (R/I)_d$ 
is neither surjective nor injective. 
Hence $R/I$ does not have the WLP. 
This shows $(i)$ $\Rightarrow$ $(ii)$. 
\end{proof}

\begin{lemma}\label{eq2 of WLP} 
Let $I$ be an $\Gm$-full ideal of $R$ and $\overline{I}$ the image of $I$ in $R/xR$ 
for a general linear form $x$ in $R$.  Let $\beta _{ij}$ be as in  Notation~\ref{notation of mfull}.
Then the following conditions are equivalent. 
\begin{enumerate}
\item[$(i)$] 
$R/I$ has the WLP. 
\item[$(ii)$]
$\beta_{n-2,n-2+j} (\overline{I}) = 0$ for all $j>d$. 
\end{enumerate}
\end{lemma}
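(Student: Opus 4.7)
The plan is to show that both (i) and (ii) are equivalent to the same numerical inequality $\delta \le d$, where $\delta$ and $d$ are as in Notation~\ref{notation of mfull}(3). Lemma~\ref{eq1 of WLP} has already done this translation for (i), so the real work is to translate (ii) into the same inequality.

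To do this, set $S = R/xR$, which is a polynomial ring in $n-1$ variables (after choosing coordinates so that $x = x_n$), and identify $S/\overline{I}$ with $R/(I + xR)$; by Notation~\ref{notation of mfull}(3) this Artinian algebra has socle degree $\delta - 1$. Applying Wiebe's formula cited in Notation~\ref{notation of mfull}(4) in the ring $S$ (with $n-1$ variables in place of $n$) gives
\begin{equation*}
\beta_{n-2,\, n-2+j}(\overline{I}) \;=\; \dim_K \Soc(S/\overline{I})_{j-1}
\end{equation*}
for every $j \ge 1$. Hence condition (ii) is the statement that $\Soc(S/\overline{I})_m = 0$ for every $m \ge d$. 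Since the top socle degree of $S/\overline{I}$ is exactly $\delta - 1$, this last condition is equivalent to $\delta - 1 < d$, i.e., to $\delta \le d$. Combining with Lemma~\ref{eq1 of WLP} closes the loop.

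The only delicate point is tracking the index shift between the diagonal Betti indexing $\beta_{i,i+j}$ and the actual degree in which the socle sits; this is what forces (ii) to read ``$j > d$'' rather than ``$j \ge d$''. Apart from this bookkeeping, the argument is a short unwinding of two numerical characterizations and I do not expect any substantive obstacle.
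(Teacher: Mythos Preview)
Your proposal is correct and follows essentially the same route as the paper: both reduce (ii) to the inequality $\delta \le d$ via the identity $\dim_K \Soc(S/\overline{I})_{j-1} = \beta_{n-2,\,n-2+j}(\overline{I})$ in the $(n-1)$-variable ring $S=R/xR$, and then invoke Lemma~\ref{eq1 of WLP}. The only cosmetic difference is where the index shift is placed; the substance is identical.
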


\begin{proof} 
Recall that  $d$ is the  minimum  $j$ such that  
$\beta_{n-1,n-1+j} (I) > 0$.  (See Notation~\ref{notation of mfull} (3) and (4).) 
Since 
\begin{align*}
\dim_K \Soc(\overline{R}/\overline{I})_j = \beta_{n-2,n-1+j} (\overline{I}) 
\end{align*} 
for all $j$, 
it follows from Remark~\ref{notation of mfull} (3) that 
\begin{align*}
\delta-1=\max\{ j \mid \beta_{n-2,n-1+j} (\overline{I})>0 \}. 
\end{align*} 
Hence we have the equivalence: 
\begin{align*}
\delta \leq d \Leftrightarrow \beta_{n-2,n-2+j} (\overline{I})=0 
\ \mbox{for all $j>d$}. 
\end{align*}
Thus the assertion follows from Lemma~\ref{eq1 of WLP}.  
\end{proof}

Now we state the first theorem. 

\begin{theorem}\label{main theorem} 
Let $R=K[x_1,\ldots, x_n]$ be the polynomial ring 
in $n$ variables over an infinite field $K$ 
and $\Gm=(x_1,\ldots,x_n)$ the homogeneous maximal ideal. 
Let $I$ be an $\Gm$-primary $\Gm$-full ideal of $R$ 
and let $d$ be the minimum of all $j$ with $\beta_{n-1, n-1+j} (I) > 0$. 
Then the  following conditions are equivalent.  
\begin{enumerate}
\item[$(i)$] 
$R/I$ has the WLP. 
\item[$(ii)$] 
$\beta_{n-1, n-1+j} (I) =\beta_{0, j} (I)$ 
and $\beta_{n-2, n-2+j} (I)=(n-1)\beta_{0,j} (I)$,  
for all $j>d$. 
\item[$(iii)$] 
$\beta_{i, i+j} (I) ={n-1 \choose i}\beta_{0, j} (I)$  
for all $j>d$ and all $i$. 
\end{enumerate}
\end{theorem}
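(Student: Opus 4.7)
The plan is to prove the cycle (iii) $\Rightarrow$ (ii) $\Rightarrow$ (i) $\Rightarrow$ (iii), using Proposition~\ref{Betti numbers of mfull} as the main bridge relating the Betti numbers of $I$ over $R$ to those of $\overline{I}$ over $\overline{R} = R/xR$. The implication (iii) $\Rightarrow$ (ii) is immediate from the identities $\binom{n-1}{n-1} = 1$ and $\binom{n-1}{n-2} = n-1$.

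For (ii) $\Rightarrow$ (i), I would first observe that $\overline{I}$ is an ideal in the polynomial ring $\overline{R}$ in $n-1$ variables, so its projective dimension is at most $n-2$; in particular $\beta_{n-1,n-1+j}(\overline{I}) = 0$. Substituting $i = n-1$ in Proposition~\ref{Betti numbers of mfull} then gives $\beta_{n-1,n-1+j}(I) = c_j$, and combined with the first equation of (ii) this forces $c_j = \beta_{0,j}(I)$ for $j > d$. Substituting this back into the formula at $i = n-2$ and using the second equation of (ii) produces $\beta_{n-2,n-2+j}(\overline{I}) = 0$ for $j > d$, which by Lemma~\ref{eq2 of WLP} is exactly the WLP.

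For (i) $\Rightarrow$ (iii), Lemma~\ref{eq1 of WLP} yields $\delta \leq d$, and Lemma~\ref{gen of mfull}(2) then gives $\deg z_i \leq \delta \leq d$ for all $i$; hence $\beta_{0,j}(I) = c_j$ for $j > d$. Plugging this into Proposition~\ref{Betti numbers of mfull} reduces the goal to showing $\beta_{i,i+j}(\overline{I}) = 0$ for \emph{every} $i$ and every $j > d$, which is the main obstacle: Lemma~\ref{eq2 of WLP} only delivers this for the single strand $i = n-2$. To upgrade to all strands, I would use that $\overline{R}/\overline{I}$ is Artinian with top nonzero degree $\delta - 1$ (Notation~\ref{notation of mfull}(3)); since the local cohomology of an Artinian module is concentrated in cohomological degree $0$, its Castelnuovo--Mumford regularity equals its top nonzero degree. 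Thus $\mathrm{reg}(\overline{R}/\overline{I}) = \delta - 1$, and the standard regularity inequality applied to $0 \to \overline{I} \to \overline{R} \to \overline{R}/\overline{I} \to 0$ (with $\mathrm{reg}(\overline{R}) = 0$) gives $\mathrm{reg}(\overline{I}) \leq \delta \leq d$, which is precisely the required vanishing.
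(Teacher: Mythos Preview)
Your argument is correct and follows essentially the same line as the paper's proof. The paper organizes the implications as (i)$\Leftrightarrow$(ii) and (ii)$\Leftrightarrow$(iii) rather than as a cycle, but the content of the steps matches yours: the same use of Proposition~\ref{Betti numbers of mfull} together with $\beta_{n-1,n-1+j}(\overline{I})=0$ to extract $c_j=\beta_{0,j}(I)$ for $j>d$, the same reduction of WLP to $\beta_{n-2,n-2+j}(\overline{I})=0$ via Lemma~\ref{eq2 of WLP}, and the same appeal to Lemmas~\ref{eq1 of WLP} and~\ref{gen of mfull}(2) to get $\deg z_t\le\delta\le d$.

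The one place where your write-up is slightly more explicit is the passage from ``$\beta_{n-2,n-2+j}(\overline{I})=0$ for $j>d$'' (equivalently, $\delta\le d$) to ``$\beta_{i,i+j}(\overline{I})=0$ for all $i$ and $j>d$.'' The paper simply asserts this implication in its proof of (ii)$\Rightarrow$(iii), while you justify it via the Castelnuovo--Mumford regularity of the Artinian quotient $\overline{R}/\overline{I}$. That is exactly the content behind the paper's assertion (the top nonzero degree of an Artinian algebra equals its regularity, and it is witnessed by the last Betti number), so your explicit argument is a welcome clarification rather than a genuinely different route.
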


\begin{proof} 
We use Notation~\ref{notation of mfull}. 
Let $\overline{I}$ be the image of $I$ in $R/xR$.  
Noting that  
$\beta_{n-1,n-1+j} (\overline{I})=0$ for all $j$, 
we have from Proposition~\ref{Betti numbers of mfull} that   
\begin{align}
c_j=\beta_{0,j}(I)  \ \mbox{for all $j>d$} \Leftrightarrow 
\beta_{n-1,n-1+j}(I)=\beta_{0,j}(I) \ \mbox{for all $j>d$}. 
\end{align} 

(i) $\Rightarrow$ (ii): 
By Lemmas~\ref{eq1 of WLP} and~\ref{gen of mfull} (2), 
it follows that 
\begin{align}
c_j=\beta_{0,j}(I) 
\end{align} 
for all $j>d$. 
Hence we have the first equality by the equivalence (1). 
Moreover, by Lemma~\ref{eq2 of WLP} and Proposition~\ref{Betti numbers of mfull}, 
it follows that $\beta_{n-2,n-2+j}(I)=(n-1)c_j$ for all $j>d$. 
Hence we have the second equality by the above equality (2). 

(ii) $\Rightarrow$ (i): 
By our assumption (ii), 
Proposition~\ref{Betti numbers of mfull} and the equivalence (1), 
we have $\beta_{n-2,n-2+j}(\overline{I})=0$ for all $j>d$. 
Hence (ii) $\Rightarrow$ (i) follows by Lemma~\ref{eq2 of WLP}. 

(ii) $\Rightarrow$ (iii): 
Since $\beta_{n-2,n-2+j}(\overline{I})=0$ for all $j>d$, 
we have $\beta_{i,i+j}(\overline{I})=0$ for all $i$ and all $j>d$. 
Hence, noting that $\beta_{0,j}(I)=c_j$ for all $j>d$, 
we see that the desired equalities follow 
from Proposition~\ref{Betti numbers of mfull}. 

(iii) $\Rightarrow$ (ii) is trivial. 
\end{proof}

The above theorem can be strengthened as follows.

\begin{theorem} \label{corollary of main theorem}
\label{second_main_thm} 
With the same notation as Theorem~\ref{main theorem}, 
suppose that $\Gm I:x=I$ and 
$\overline{I}=(I+xR)/xR$ is $\Gm$-full as an ideal of $R/xR$ 
for some linear form $x$ in $R$. 
Then the  following conditions are equivalent.  
\begin{enumerate}
\item[$(i)$] 
$R/I$ has the WLP. 
\item[$(ii)$] 
$\beta_{n-1, n-1+j} (I) =\beta_{0, j} (I)$ for all $j>d$. 
\item[$(iii)$] 
$\beta_{i, i+j} (I) ={n-1 \choose i}\beta_{0, j} (I)$  
for all $j>d$ and all $i$. 
\end{enumerate}
\end{theorem}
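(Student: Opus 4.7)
The plan is to reduce the strengthened statement to Theorem~\ref{main theorem} and Lemma~\ref{eq1 of WLP}, with the new hypothesis that $\overline{I}$ is $\Gm$-full doing its real work only in the implication $(ii)\Rightarrow(i)$. The implication $(iii)\Rightarrow(ii)$ is trivial by specializing to $i=n-1$, and $(i)\Rightarrow(iii)$ is precisely what Theorem~\ref{main theorem} establishes, whose proof makes no use of $\Gm$-fullness of $\overline{I}$. So the only new work is $(ii)\Rightarrow(i)$.

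For that implication I would apply Proposition~\ref{Betti numbers of mfull} in two extreme homological positions. Since $R/I$ is $\Gm$-primary, $\overline{I}$ is $\Gm$-primary in the polynomial ring $R/xR$ of $n-1$ variables, so its projective dimension as an $(R/xR)$-module is at most $n-2$; in particular $\beta_{n-1,\,n-1+j}(\overline{I})=0$. The cases $i=n-1$ and $i=0$ of the proposition then give
\[
\beta_{n-1,\,n-1+j}(I)=c_j \qquad \text{and} \qquad \beta_{0,\,j}(I)=\beta_{0,\,j}(\overline{I})+c_j.
\]
Plugging these into hypothesis (ii) yields $\beta_{0,\,j}(\overline{I})=0$ for every $j>d$; that is, every minimal generator of $\overline{I}$ has degree at most $d$.

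Now I would invoke the $\Gm$-fullness of $\overline{I}$: by Lemma~\ref{gen of mfull}(1), $\{\overline{z_1},\ldots,\overline{z_m}\}$ is a minimal generating set of $\overline{I}$, and by Lemma~\ref{gen of mfull}(3), some $\overline{z_i}$ has degree exactly $\delta$. Hence $\beta_{0,\,\delta}(\overline{I})\neq 0$, which combined with the vanishing above forces $\delta\le d$. Lemma~\ref{eq1 of WLP} then upgrades this numerical inequality to the WLP for $R/I$.

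The conceptual point I want to highlight is this: in Theorem~\ref{main theorem} the second equality in (ii), involving $\beta_{n-2,\,n-2+j}(I)$, was needed precisely to extract the vanishing $\beta_{n-2,\,n-2+j}(\overline{I})=0$ and route the argument through Lemma~\ref{eq2 of WLP}. Under the extra hypothesis that $\overline{I}$ is $\Gm$-full, that detour becomes unnecessary: $\Gm$-fullness of $\overline{I}$ directly supplies a minimal generator of $\overline{I}$ in degree $\delta$, which is exactly what is needed to conclude $\delta\le d$ and then invoke Lemma~\ref{eq1 of WLP}. The only genuine obstacle to anticipate is correctly handling the bookkeeping between Betti numbers over $R$ and over $R/xR$ via Proposition~\ref{Betti numbers of mfull}, together with keeping track of which general linear form $x$ witnesses both the $\Gm$-fullness of $I$ and that of $\overline{I}$.
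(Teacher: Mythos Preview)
Your proof is correct and follows essentially the same route as the paper. The paper also reduces to showing $(ii)\Rightarrow(i)$, uses Proposition~\ref{Betti numbers of mfull} at $i=n-1$ to get $\beta_{n-1,n-1+j}(I)=c_j$, infers from $(ii)$ that $\deg z_t\le d$ for all $t$ (equivalently your $\beta_{0,j}(\overline{I})=0$ for $j>d$, via Lemma~\ref{gen of mfull}(1)), then invokes Lemma~\ref{gen of mfull}(3) to get $\delta\le d$ and concludes with Lemma~\ref{eq1 of WLP}.
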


\begin{proof} 
In view of Theorem~\ref{main theorem}, 
it suffices to show the assertion (ii) $\Rightarrow$ (i). 
We use Notation~\ref{notation of mfull}.  
Recall that $d=\min\{\deg(xy_s) \mid 1\leq s \leq \ell\}$, 
as explained in Remark~\ref{notation of mfull} (3) and (4). 
By Proposition~\ref{Betti numbers of mfull}, 
we have $\beta_{n-1, n-1+j}(I)=c_j$ for all $j$, 
since $\beta_{n-1,n-1+j}(\overline{I})=0$ for all $j$. 
Now assume (ii). 
Then $\beta_{0,j}(I)=c_j$ for all $j>d$. 
Therefore, we have 
$\deg(z_t) \leq d$ for all $t=1,2,\ldots,m$. 
On the other hand, 
since we assume that $\overline{I}$ is $\Gm$-full, 
we have $\max\{\deg(z_t) \mid 1\leq t \leq m\}=\delta$ 
by Lemma~\ref{gen of mfull} (3). 
Thus $\delta \leq d$, 
and $R/I$ has the WLP by Lemma~\ref{eq1 of WLP}.    
\end{proof}

\begin{corollary} \label{second theorem} 
With the same notation as Theorem~\ref{main theorem},  
let $I$ be an $\Gm$-primary completely $\Gm$-full ideal of $R$ 
(see Definition~\ref{completely m-full} in the next section).  
Then the  following conditions are equivalent.  
\begin{enumerate}
\item[$(i)$] 
$R/I$ has the WLP. 
\item[$(ii)$] 
$\beta_{n-1, n-1+j} (I) =\beta_{0, j} (I)$ for all $j>d$. 
\item[$(iii)$] 
$\beta_{i, i+j} (I) ={n-1 \choose i}\beta_{0, j} (I)$  
for all $j>d$ and for all $i$. 
\end{enumerate}
\end{corollary}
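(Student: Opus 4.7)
The plan is to deduce Corollary~\ref{second theorem} directly from Theorem~\ref{second_main_thm}, which has already done all the real work. The only thing needed is a bridge between the hypothesis of complete $\Gm$-fullness and the hypothesis of Theorem~\ref{second_main_thm}, namely the existence of a single linear form $x$ for which both $\Gm I:x = I$ and $\overline{I} = (I+xR)/xR$ is $\Gm$-full in $R/xR$.

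First I would invoke the definition (to be given in the next section): being completely $\Gm$-full means, at the very least, that $I$ itself is $\Gm$-full and that the image $\overline{I}$ in $R/xR$ is again $\Gm$-full (in fact completely $\Gm$-full) for a general linear form $x$. This is precisely the inductive flavor that the word \emph{completely} is intended to convey.

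Next I would match the quantifiers with Theorem~\ref{second_main_thm}. By Notation and Remark~\ref{notation of mfull}~(1), the equality $\Gm I:x = I$ holds for $x$ in a nonempty Zariski-open subset of $R_1$; and by the definition of completely $\Gm$-full, $\overline{I}$ is $\Gm$-full in $R/xR$ for $x$ in another nonempty Zariski-open subset of $R_1$. Since $K$ is infinite, the intersection of these two sets is nonempty, so one can choose one and the same linear form $x$ satisfying both conditions simultaneously.

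With such $x$ in hand, the hypotheses of Theorem~\ref{second_main_thm} are met verbatim, and the equivalence of (i), (ii), (iii) follows immediately. The only step that requires any thought at all is the bookkeeping in the previous paragraph: everything else is just the observation that complete $\Gm$-fullness is a strictly stronger condition than the one used in Theorem~\ref{second_main_thm}. Accordingly, I expect no serious obstacle here beyond confirming that the definition of completely $\Gm$-full (Definition~\ref{completely m-full}) does indeed include $\Gm$-fullness of $\overline{I}$ as a general-position property, which is precisely how such definitions are invariably phrased.
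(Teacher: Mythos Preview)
Your approach is correct and is exactly what the paper does: it simply says ``This follows from Theorem~\ref{corollary of main theorem}'' (which is the same as Theorem~\ref{second_main_thm}), with no further comment. Your extra paragraph about intersecting Zariski-open sets is more care than strictly needed, since Definition~\ref{completely m-full} already packages both conditions for a single general linear form $x$, but it does no harm.
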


\begin{proof} 
This follows from Theorem~\ref{corollary of main theorem}. 
\end{proof}

Proposition~\ref{componentwise} in the next section says that 
Theorem~\ref{corollary of main theorem} is a generalization of Theorem 3.1 
in \cite{Wiebe} due to Wiebe. 
Furthermore, the following example shows 
that Theorem~\ref{corollary of main theorem} 
is not contained in Wiebe's result.

\begin{example}  \label{ex333}   
Let $R=K[w,x,y,z]$  be the polynomial ring in four variables. 
Let $I=(w^3, x^3, x^2y) + (w,x,y,z)^4$. 
Then, it is easy to see that 
$I$ is not componentwise linear, but 
$I$ and $I+(z)/(z)$ are $\Gm$-full ideals.  
\end{example}

Finally in  this section, 
we give an example of $\Gm$-full ideal 
where conditions (ii) and (iii) of Theorem~\ref{corollary of main theorem} 
are not equivalent. 
In other words, 
the condition ``$I+gR/R$ is $\Gm$-full'' 
cannot be dropped in this theorem.

\begin{example}  \label{new_ex_showing_ii_and_iii_are_not_equiv}   
Let $R=K[x,y,z]$  be the polynomial ring in three variables. 
Let $I=(x^3, x^2y, x^2z, y^3) + (x,y,z)^4$. 
Then, it is easy to see that 
$I$ is $\Gm$-full and $I+(g)/(g)$ is not $\Gm$-full, 
where $g$ is a general linear form of  $R$. 
The Hilbert function of $R/I$ is  $1+3t+6t^2+6t^3$ and that of $R/I+gR$ is $1+2t+3t^2+t^3$.  
Thus $R/I$ is does not have the WLP. 
The minimal free resolution of $I$ is:
\[
0 \rightarrow R(-5)\oplus R(-6)^6 \rightarrow R(-4)^3 \oplus R(-5)^{13} 
\rightarrow R(-3)^4 \oplus R(-4)^6 \rightarrow I \rightarrow 0. 
\]
Note that $d=3$, $\beta _{2,6} =  \beta _{0, 4}$ and $\beta _{1,5}  > 2\beta _{0, 4}$.  
\end{example}


\section{Complete $\Gm$-fullness  and  componentwise linearity}

\begin{definition} [\cite{Watanabe2}, Definition 2] \label{completely m-full}
Let $R=K[x_1,\ldots,x_n]$ be the polynomial ring in $n$ variables 
over an infinite filed $K$, 
and $I$ a homogeneous ideal of $R$. 
We define the {\em completely $\Gm$-full ideals} recursively as follows. 
\begin{enumerate}
\item[(1)] 
If $n=0$ (i.e., if $R$ is a field), then the zero ideal is completely $\Gm$-full.  
\item[(2)] 
If $n>0$, 
then $I$ is completely $\Gm$-full 
if $\Gm I:x=I$ and $(I+xR)/xR$ is completely $\Gm$-full as an ideal of $R/xR$, 
where $x$ is a general linear form. 
(The definition makes sense by induction on $n$.) 
\end{enumerate}
\end{definition}

\begin{definition}
A monomial ideal $I$ of $R=K[x_1,\ldots,x_n]$ is said to be {\em stable} 
if $I$ satisfies the following condition: 
for each monomial $u\in I$, 
the monomial $x_iu/x_{m(u)}$ belongs to $I$ for every $i<m(u)$, 
where $m(u)$ is the largest index $j$ such that $x_j$ divides $u$. 
\end{definition}

\begin{example}\label{stable ideals} 
Typical examples of completely $\Gm$-full ideals are stable monomial ideals. 
Let $I$ be a stable monomial ideal of $R=K[x_1,\ldots,x_n]$. 
First we first  that $\Gm I: x_n=I$. 
Let $w\in\Gm I: x_n$ be a monomial. 
Since $x_nw\in\Gm I$, 
we have $x_nw=x_iu$ for some $x_i$ and a monomial $u\in I$. 
Hence $w=x_iu/x_n\in I$, as $I$ is stable. 
Therefore $\Gm I:x_n\subset I$, 
and other inclusion is clear. 
Furthermore, 
since $\overline{I}=(I+x_nR)/x_nR$ is stable in $\overline{R}=R/x_nR$ again, 
our assertion follows by inductive argument. 
\end{example}

\begin{proposition} \label{componentwise} 
Every componentwise linear ideal of $R=K[x_1,\ldots,x_n]$
is a completely $\Gm$-full ideal. 
\end{proposition}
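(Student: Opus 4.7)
The plan is to induct on $n$, the number of variables. The base case $n=0$ is immediate from Definition~\ref{completely m-full}~(1), since the zero ideal in a field is completely $\Gm$-full by fiat. So suppose $n \geq 1$ and that every componentwise linear ideal of $K[x_1,\ldots,x_{n-1}]$ is completely $\Gm$-full.

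Let $I \subset R = K[x_1,\ldots,x_n]$ be componentwise linear. To invoke Definition~\ref{completely m-full}~(2) I need to exhibit a single sufficiently general linear form $x$ such that both $\Gm I : x = I$ and $\overline{I} := (I+xR)/xR$ is completely $\Gm$-full as an ideal of $R/xR$. For the first requirement I appeal to Proposition~2.8 of \cite{CNR}, which says that a componentwise linear ideal is $\Gm$-full; combined with Remark~\ref{notation of mfull}~(1), this gives $\Gm I : x = I$ for a general linear form $x$. For the second, since the inductive hypothesis applied to $R/xR \cong K[x_1,\ldots,x_{n-1}]$ reduces the problem to verifying that $\overline{I}$ is componentwise linear, I appeal to Proposition~2.11 of \cite{CNR}, which says that the componentwise linear property is preserved when passing to $R/xR$ modulo a generic linear form. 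Since each of these two conditions holds on a nonempty Zariski open subset of linear forms in $R_1$, they can be arranged simultaneously.

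The main conceptual obstacle, namely the stability of the componentwise linear property under quotienting by a generic linear form, is already handled in \cite{CNR}; the delicate point is that one must take the \emph{same} $x$ in both clauses of Definition~\ref{completely m-full}~(2), but this is harmless as the intersection of two generic open conditions is again generic. Once Propositions~2.8 and 2.11 of \cite{CNR} are available, the recursion built into Definition~\ref{completely m-full} aligns exactly with the recursion that componentwise linear ideals satisfy under quotients, and the induction closes without additional computation.
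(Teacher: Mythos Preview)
Your inductive skeleton matches the paper's: reduce to showing that for a general linear form $x$ one has $\Gm I:x=I$ and that $\overline{I}=(I+xR)/xR$ is again componentwise linear, then recurse. The substantive difference is in how the second clause is justified. You invoke Proposition~2.11 of \cite{CNR} as directly asserting that componentwise linearity descends modulo a generic linear form. The paper does \emph{not} cite this; instead it argues it out: each $I_{\langle d\rangle}$ is componentwise linear (Lemma~8.2.10 of \cite{HH}), hence $\Gm$-full by \cite{CNR}, and then the Betti-number formula of Proposition~\ref{Betti numbers of mfull} applied to $I_{\langle d\rangle}$ forces $\overline{I_{\langle d\rangle}}=\overline{I}_{\langle d\rangle}$ to have a linear resolution. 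The paper even records ``$\overline{I}$ is componentwise linear'' as a separate proposition immediately after Proposition~\ref{componentwise}, extracted from this argument; that, together with the introduction's description of the proof as ``new,'' suggests the authors did not regard this descent statement as something one can simply quote from \cite{CNR}. You should verify that Proposition~2.11 of \cite{CNR} really says what you claim; Propositions~2.8 and 2.11 there are the inputs the paper uses for ``componentwise linear $\Rightarrow$ $\Gm$-full,'' not for the descent step. If your citation is accurate, your argument is correct and shorter; the paper's route has the merit of being self-contained and of exhibiting the descent as a consequence of the $\Gm$-full Betti formula, which is one of the paper's own tools. There is also a minor technical point the paper addresses and you do not need: because the paper works with all the $I_{\langle d\rangle}$ simultaneously, it must argue that a \emph{single} general $x$ works for infinitely many $\Gm$-fullness conditions, which requires a short stabilization argument.
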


\begin{proof} 
Let $I=\oplus_{j\geq 0}I_j$ be a componentwise linear ideal 
of $R=\oplus_{j\geq 0}R_j$ 
and let $I_{<d>}=\oplus_{j\geq d}(I_{<d>})_j$ be 
the ideal generated by all homogeneous polynomials of degree $d$ belonging to $I$. 
Then it follows by Lemma 8.2.10 in \cite{HH} that 
$I_{<d>}$ is a componentwise linear ideal for all $d$. 
Recall the result of Conca-Negri-Rossi  \cite{CNR} that 
every componentwise linear ideal is $\Gm$-full. 
Hence $I$ and $I_{<d>}$ are $\Gm$-full ideals for all $d$. 

First we  show the following: 
there exists a common linear form $x$ in $R$ 
such that 
$\Gm I:x=I$ and $\Gm I_{<d>}:x=I_{<d>}$ for all $d$. 
Note that there is a positive integer $k$ such that 
$R_iI_k=I_{k+i}$ for all $i\geq 0$, 
where $R_iI_k=
\{\sum_{\lambda}r_\lambda v_\lambda \mid r_\lambda \in R_i, v_\lambda \in I_k\}$. 
Write $I_{<k>}=\oplus_{j\geq k}(I_{<k>})_j$. 
Then we have $I_{<k+i>}=\oplus_{j\geq k+i}(I_{<k>})_j$ for all $i\geq 0$. 
Hence it is easy to see that 
if $\Gm I_{<k>}:z=I_{<k>}$ for some linear form $z$ 
then $\Gm I_{<k+i>}:z=I_{<k+i>}$ for all $i\geq 0$, 
as $(I_{<k+i>})_j=(I_{<k>})_j$ for all $j\geq k+i$. 
Therefore if $x$ is a sufficiently general linear form, 
we have both $\Gm I:x=I$ and $\Gm I_{<d>}:x=I_{<d>}$ for all $d$. 

To prove this proposition, 
it suffices to show that 
$\overline{I}=(I+xR)/xR$ is also a componentwise linear ideal of $R/xR$, 
because if so, then $\overline{I}$ is $\Gm$-full 
by the above result stated in \cite{CNR}. 
Therefore our assertion follows by inductive argument. 
Let $\overline{I}_{<d>}=\oplus_{j\geq d}(\overline{I}_{<d>})_j$ be 
the ideal generated by all homogeneous polynomials of degree $d$ 
belonging to $\overline{I}$. 
We have to show that $\overline{I}_{<d>}$ has a linear resolution for all $d$. 
We use the same notation as Proposition~\ref{Betti numbers of mfull} for $I_{<d>}$. 
Since $\overline{I}_{<d>}=(I_{<d>}+xR)/xR$, 
it follows by Proposition~\ref{Betti numbers of mfull} that 
\begin{align*}
\beta_{i,i+j} (I_{<d>}) = \beta_{i,i+j} (\overline{I}_{<d>}) + {n-1\choose i} c_j 
\end{align*} 
for all $i$ and all $j$. 
Hence $\overline{I}_{<d>}$ has a linear resolution, 
as $I_{<d>}$ does. 
This completes the proof. 
\end{proof}

From the preceding proof, 
we obtain an immediate consequence. 

\begin{proposition} 
Let $I$ be a componentwise linear ideal of $R=K[x_1,\ldots,x_n]$. 
Then $\overline{I}=(I+xR)/xR$ is  
a componentwise linear ideal of $\overline{R}=R/xR$ 
for a general linear form $x$ in $R$. 
\end{proposition}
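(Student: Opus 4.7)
The plan is essentially to re-extract the argument already embedded in the proof of Proposition~\ref{componentwise}. The statement being proved there produced the componentwise linearity of $\overline{I}$ as an intermediate step (in order to iterate the construction and get complete $\Gm$-fullness), so the content of this proposition is exactly that intermediate step.

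More precisely, I would proceed as follows. Let $I$ be a componentwise linear ideal of $R$, and for each $d$ let $I_{\langle d\rangle}$ denote the ideal generated by the degree-$d$ part of $I$; by Lemma 8.2.10 of \cite{HH}, each $I_{\langle d\rangle}$ is again componentwise linear, hence $\Gm$-full by the result of Conca–Negri–Rossi cited in \cite{CNR}. As in the proof of Proposition~\ref{componentwise}, using the fact that only finitely many of the ideals $I_{\langle d\rangle}$ are ``genuinely new'' (eventually $I_{\langle k+i\rangle}$ is determined by $I_{\langle k\rangle}$ in a uniform way), one can choose a single sufficiently general linear form $x$ such that simultaneously
\[
\Gm I:x=I\qquad\text{and}\qquad \Gm I_{\langle d\rangle}:x=I_{\langle d\rangle}\text{ for all }d.
\]

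Next I would identify $\overline{I_{\langle d\rangle}}=(I_{\langle d\rangle}+xR)/xR$ with $\overline{I}_{\langle d\rangle}$, the ideal of $R/xR$ generated by the degree-$d$ part of $\overline{I}$; this is immediate since any element of $\overline{I}$ of degree $d$ lifts to an element of $I+xR$ of degree $d$, whose image modulo $xR$ coincides with the image of its degree-$d$ component from $I_{\langle d\rangle}$. Then Proposition~\ref{Betti numbers of mfull} applied to the $\Gm$-full ideal $I_{\langle d\rangle}$ yields
\[
\beta_{i,i+j}(I_{\langle d\rangle})=\beta_{i,i+j}(\overline{I}_{\langle d\rangle})+\binom{n-1}{i}c_j
\]
for all $i,j$, where $c_j$ is as defined in that proposition. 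Since $I$ is componentwise linear, $I_{\langle d\rangle}$ has a linear resolution, i.e.\ $\beta_{i,i+j}(I_{\langle d\rangle})=0$ for $j\neq d$; because both terms on the right-hand side are nonnegative, this forces $\beta_{i,i+j}(\overline{I}_{\langle d\rangle})=0$ for $j\neq d$. Hence $\overline{I}_{\langle d\rangle}$ has a linear resolution for every $d$, which by definition means $\overline{I}$ is componentwise linear in $R/xR$.

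There is no real obstacle here; the only thing to be careful about is the uniform choice of $x$ (so that $x$ witnesses $\Gm$-fullness of $I$ and of every $I_{\langle d\rangle}$ at once), and this is exactly the observation already made inside the proof of Proposition~\ref{componentwise}. In the write-up I would simply refer to that paragraph rather than repeat it, keeping the proof of this proposition as short as the hint ``an immediate consequence'' suggests.
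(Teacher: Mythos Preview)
Your proposal is correct and matches the paper's approach exactly: the paper gives no separate proof for this proposition, stating only that it is ``an immediate consequence'' of the preceding proof of Proposition~\ref{componentwise}, and what you have written is precisely the extraction of that intermediate step (choosing a common $x$, identifying $\overline{I_{\langle d\rangle}}$ with $\overline{I}_{\langle d\rangle}$, and applying Proposition~\ref{Betti numbers of mfull} to transfer linearity of the resolution). Your added justification that $\overline{I_{\langle d\rangle}}=\overline{I}_{\langle d\rangle}$ is a small elaboration of a point the paper simply asserts.
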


We conjecture that a completely $\Gm$-full ideal is componentwise linear.
We have already proved that a componentwise linear ideal is completely $\Gm$-full. 
We prove the converse with the assumption that the generic initial ideal is stable.

\begin{theorem}\label{completely_m_full_is_componentwise_liear}  
Let $I$ be a homogeneous ideal of $R=K[x_1, \ldots, x_n]$ and 
$\Gin(I)$ the generic initial ideal of $I$ 
with respect to  the graded reverse lexicographic order 
induced by $x_1>\cdots>x_n$. 
Assume that $\Gin(I)$ is stable. 
Then $I$ is completely $\Gm$-full 
if and only if $I$ is componentwise linear. 
\end{theorem}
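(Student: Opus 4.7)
The plan is to prove the nontrivial direction, the converse being Proposition~\ref{componentwise}. I will reduce to showing
\[
\beta_{i, i+j}(I) = \beta_{i, i+j}(\Gin(I)) \quad \text{for all } i, j;
\]
granted this equality, $\Gin(I)$ is stable and hence componentwise linear by Eliahou--Kervaire, and the Aramova--Herzog--Hibi theorem then forces $I$ itself to be componentwise linear. I prove the Betti equality by induction on $n$, with the trivial base case $n = 1$, where every homogeneous ideal of $K[x_1]$ is a power of $(x_1)$.

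For the inductive step, let $x$ be a sufficiently general linear form of $R$ and set $\overline{I} = (I + xR)/xR$. Two preparatory observations set the stage. First, $\overline{\Gin(I)}$ is a stable ideal of $R/(x_n)$: if $u$ is a minimal generator of $\overline{\Gin(I)}$ (equivalently, a generator of $\Gin(I)$ with $m(u) < n$), then for $i < m(u)$ the stability of $\Gin(I)$ gives $x_i u / x_{m(u)} \in \Gin(I)$, and this monomial, still having $m < n$, lies in $\overline{\Gin(I)}$. Second, for the graded reverse lexicographic order, the generic initial ideal commutes with the generic hyperplane section, so $\Gin(\overline{I}) = \overline{\Gin(I)}$ after identifying $R/xR$ with $R/(x_n)$ via the generic coordinate change realizing $\Gin(I)$. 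Consequently $\overline{I}$ is completely $\Gm$-full in $R/xR$ (directly from the definition) with stable generic initial ideal, so by the inductive hypothesis $\beta_{i, i+j}(\overline{I}) = \beta_{i, i+j}(\overline{\Gin(I)})$ for all $i, j$.

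Now I apply Proposition~\ref{Betti numbers of mfull} both to $I$ (with $x$) and to $\Gin(I)$ (with $x_n$, which works by the stability of $\Gin(I)$, as in Example~\ref{stable ideals}) and subtract to get
\[
\beta_{i, i+j}(\Gin(I)) - \beta_{i, i+j}(I) = \binom{n-1}{i}\bigl(c_j(\Gin(I)) - c_j(I)\bigr).
\]
Since $I$ and $\Gin(I)$ share the same Hilbert function, their alternating Betti sums coincide, so substituting the identity above yields
\[
(1-t)^{n-1}\sum_{j}\bigl(c_j(\Gin(I)) - c_j(I)\bigr)\, t^j = 0;
\]
as $(1-t)^{n-1}$ is not a zero divisor in $K[\![t]\!]$, each $c_j(I) = c_j(\Gin(I))$, which forces $\beta_{i, i+j}(I) = \beta_{i, i+j}(\Gin(I))$ for all $i, j$ and closes the induction. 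The main obstacle is the identity $\Gin(\overline{I}) = \overline{\Gin(I)}$, which exploits the special compatibility of the reverse lexicographic order with generic hyperplane sections; a secondary point, in positive characteristic, is that the Aramova--Herzog--Hibi characterization relies precisely on the hypothesis that $\Gin(I)$ is stable, so that both $\Gin(I)$ and (after propagation through the induction) the smaller-variable generic initial ideals are themselves componentwise linear.
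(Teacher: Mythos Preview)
Your argument is correct and follows the same inductive skeleton as the paper: induct on $n$, apply Proposition~\ref{Betti numbers of mfull} to both $I$ and $\Gin(I)$, and use Green's identity $\Gin(I+xR)=\Gin(I)+x_nR$ to pass to $n-1$ variables. The paper differs only in bookkeeping: it tracks just $\beta_0$ and invokes the Nagel--R\"omer criterion \cite[Theorem~2.5]{NT} (stable $\Gin(I)$ together with $\beta_0(I)=\beta_0(\Gin(I))$ forces componentwise linearity), whereas you carry all $\beta_{i,i+j}$ and appeal to Aramova--Herzog--Hibi; and for the equality $c_j(I)=c_j(\Gin(I))$ the paper reads it off directly from the exact sequence $0\to (I:x)/I\to R/I\to (I+xR)/I\to 0$ together with Conca's lemma that $R/(I+xR)$ and $R/(\Gin(I)+x_nR)$ share a Hilbert function, rather than deducing it from the inductive Betti identity via your $(1-t)^{n-1}$ cancellation.
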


\begin{proof} 
The `if' part is proved in Proposition~\ref{componentwise}. 
So we show the `only if' part. 
Set $J=\Gin(I)$. 
Since $J$ is stable, 
it suffices to show that 
$\beta_{0} (I) = \beta_{0} (J)$ by Theorem 2.5 in \cite{NT}, that is, 
the minimal number of generator of $I$ 
coincides with that of $J$. 
We use induction on the number $n$ of variables. 
The case where $n=1$ is obvious. 
Let $n\geq 2$. 
Since the minimal number of generators of $(I:\Gm)/I$ is equal to 
the dimension of $(I:\Gm)/I$ as a $K$-vector space, 
it follows 
by Proposition~\ref{Betti numbers of mfull} and Remark~\ref{notation of mfull} (1) 
that  
\begin{align*}
\beta_{0} (I) = \beta_{0} (\overline{I}) + \dim_K ((I:\Gm)/I), 
\end{align*} 
where set $\overline{I}=(I+xR)/xR$ for a general linear form $x$ in $R$.  
Similarly, since $\Gm J:x_n=J$, 
$J$ is stable and consequently completely $\Gm$-full 
by Example~\ref{stable ideals}, 
we have 
\begin{align*}
\beta_{0} (J) = \beta_{0} (\overline{J}) + \dim_K ((J:\Gm)/J), 
\end{align*} 
where $\overline{J}=(J+x_nR)/x_nR$. 
First we  show that 
\begin{align*} 
\dim_K ((I:\Gm)/I) = \dim_K ((J:\Gm)/J). 
\end{align*} 
From the exact sequence 
\begin{align*}
0 \rightarrow (I:x)/I \rightarrow R/I \stackrel{\times x}{\rightarrow} 
(I+xR)/I \rightarrow 0, 
\end{align*} 
it follows that 
\begin{align*}
\dim_K ((I:x)/I)_j = \dim_K (R/I)_j - \dim_K (R/I)_{j+1}+\dim_K (R/(I+xR))_{j+1}
\end{align*} 
for all $j$. 
Similarly, we get 
\begin{eqnarray*}
\lefteqn{\dim_K ((J : x_n)/J)_j   = } \\  & &  \dim_K (R/J)_j - \dim_K (R/J)_{j+1} +   \dim_K (R/(J+x_nR))_{j+1}
\end{eqnarray*}

for all $j$. 
Here we recall the well-known facts: 
\begin{itemize}
\item 
$\dim_K (R/I)_j = \dim_K (R/J)_j$ for all $j$. 
\item 
$\dim_K (R/(I+xR))_j = \dim_K (R/(J+x_nR))_j$ for all $j$ (Lemma 1.2 in \cite{C1}). 
\end{itemize}
Therefore we have 
\begin{align*}
\dim_K ((I:x)/I)_j = \dim_K ((J:x_n)/J)_j 
\end{align*}
for all $j$. 
Thus, since $I$ and $J$ are $\Gm$-full, 
it follows that $I:\Gm=I:x$ and $J:\Gm=J:x_n$, 
and hence we see that 
$\dim_K ((I:\Gm)/I) = \dim_K ((J:\Gm)/J)$. 
Furthermore, 
since both $\overline{I}$ and $\overline{J}$ are completely $\Gm$-full again 
and $\Gin(I+xR) = \Gin(I) + x_nR$ (Corollary 2.15 in \cite{Gr}),  
it follows by the inductive assumption that 
\begin{align*}
\beta_{0} (\overline{I}) = \beta_{0} (\overline{J}). 
\end{align*} 
This completes the proof. 
\end{proof}



%

\end{document}